\newtheorem{theorem}{Theorem}
\theoremstyle{plain}
\newtheorem{corollary}[theorem]{Corollary}
\newtheorem{definition}[theorem]{Definition}
\newtheorem{lemma}[theorem]{Lemma}
\newtheorem{proposition}[theorem]{Proposition}
\numberwithin{equation}{section}
\newcommand{\PP}{\mathbb{P}}
\newcommand{\QQ}{\mathbb{Q}}
\newcommand{\FF}{\mathbb{F}}
\DeclareMathOperator{\Branch}{Branch}
\DeclareMathOperator{\Aut}{Aut}
\newcommand{\sh}{\mathscr}
\newcommand{\ZZ}{\mathbb{Z}}
\newcommand{\CC}{\mathbb{C}}
\newcommand{\J}{\mathcal{J}}
\begin{document}

\title{Strongly Cyclic Coverings of Cyclic Curves}
\author{Charles Siegel}
\address[Charles Siegel]{Kavli Institute for the Physics and Mathematics of the Universe (WPI), Todai Institutes for Advanced Study, the University of Tokyo}
\email[Charles Siegel]{charles.siegel@ipmu.jp}%
\urladdr{http://db.ipmu.jp/member/personal/2754en.html}
%\thanks{Thanks for Author One.}
%\thanks{This paper is in final form and no version of it will be submitted for publication elsewhere.}
\date{}
\subjclass{} %
\keywords{}%

\begin{abstract}
In this note, we introduce the notion of an unramified strongly cyclic covering for a cyclic curve, a class that has similar properties to, and contains, unramified double covers of hyperelliptic curves.  We determine several of their basic properties, extending the theorems in \cite{MR2217998} to this larger class.  In particular, we will write down equations for smooth affine models, determine when they are isomorphic, and discuss the curves that they are ramified cyclic covers of.
\end{abstract}
\maketitle

The algebra and geometry of double covers of hyperelliptic curves is well understood, and has been studied by, among many others, Farkas \cite{MR0437741,MR922295,MR908651} and Bujalance \cite{MR855143}, from the topological viewpoint and Fuertes and Gonz\'alez-Diez \cite{MR2217998} from a more algebraic standpoint.  This can be done because the square-trivial line bundles on the curve can all be described in terms of the fixed points of the hyperelliptic involution.  This yields concrete equations, which can then be used to get very strong geometric information about the covers that cannot be so readily obtained for a general curve.

One would like to generalize these results to understand concretely examples of higher degree cyclic coverings, as studied by Faber in \cite{MR954752} and recently by Lange and Ortega in \cite{MR2753712}.  The natural generalization is to cyclic curves of degree $d$, because for any fixed point of the cyclic automorphism, $p$, the class $dp$ does not depend on which fixed point was chosen.  Unfortunately, there are not enough of these points to describe every degree $d$ cyclic covering.  Below, we will study the set of cyclic coverings that can be written in terms of the fixed points, and will prove analogues of the results for double covers of hyperelliptic curves in this case.

Below, we will not distinguish between an affine algebraic curve, its smooth and complete model, and its model as a compact Riemann surface.

\begin{definition}[Cyclic Curve]
A cyclic curve is a curve $C$ with a morphism $f:C\to \PP^1$ of degree $d$ and Galois group $\ZZ/d\ZZ$ and $\ZZ/d\ZZ\mathrel{\lhd}\Aut(C)$.
\end{definition}

Our goal will be to study a class of divisors on a cyclic curve.  For the remainder of this note, let $f:C\to\PP^1$ be the cyclic $g^1_d$ on a curve of genus $g$, and $x_1,\ldots,x_r$ the ramification points of $f$.  We want to look at the divisors of the form $\sum_{i=1}^r n_ix_i$ and of degree $0$.  Equivalently, we can write them as $\sum_{i=1}^r n_ix_i-\frac{\sum n_i}{d}g^1_d$ with $0\leq n_i<d$, and we will equivalent mean the divisor of degree $0$ and the positive part by itself.

From here on, we fix the degree $d$ to be a prime number, and we will assume that the map $f$ is unramified over $\infty$, and we will identify $\PP^1\setminus\{\infty\}$ with the complex numbers.

\begin{proposition}
\label{prop:numstrongpoints}
There are $d^{r-2}$ points of order $d$ in $\J(C)[d]$ that can be written in the form $\sum n_ix_i$ as above.
\end{proposition}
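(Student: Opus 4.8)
The plan is to recognise the set of classes in question as a subgroup of $\J(C)[d]$ and to compute its order by bounding its $\FF_d$-dimension from above and below. Since $d$ is prime, each ramification point $x_i$ is totally ramified, so $f^{-1}(f(x_i)) = d\,x_i$ and hence $d\,x_i \sim g^1_d$ for every $i$. Consequently each class $e_i := [x_i - x_1]$ lies in $\J(C)[d]$, and the classes described in the statement are exactly the elements of the subgroup $G := \langle e_2, \ldots, e_r\rangle$: using $g^1_d \sim d\,x_1$ one has $\sum_i n_i x_i - \tfrac{\sum n_i}{d} g^1_d \sim \sum_{i\ge 2} n_i e_i$, and conversely any $\sum_{i\ge 2} m_i e_i$ can be normalised to this shape by choosing $n_1$ so that $d \mid \sum n_i$. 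Because each $e_i$ is killed by $d$, the group $G$ is an $\FF_d$-vector space, so it suffices to prove $\dim_{\FF_d} G = r - 2$; then $|G| = d^{r-2}$.

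For the upper bound I would pass to an explicit model. Writing $f$ as $y^d = \prod_{j=1}^r (x - a_j)^{c_j}$ with $a_j = f(x_j)$ and $0 < c_j < d$, the hypothesis that $f$ is unramified over $\infty$ forces $\sum_j c_j \equiv 0 \pmod d$. Computing on $C$ gives
\[
\operatorname{div}(y) = \sum_{j=1}^r c_j\, x_j - \frac{\sum_j c_j}{d}\, f^{-1}(\infty),
\]
and since $f^{-1}(\infty) \sim g^1_d \sim d\,x_1$ this yields the relation $\sum_{j\ge 2} c_j e_j = 0$. As $d$ is prime every $c_j$ is a unit modulo $d$, so this expresses one generator in terms of the others and gives $\dim_{\FF_d} G \le r - 2$.

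The main work is the reverse inequality: I must show there are no relations among $e_2, \ldots, e_r$ beyond the scalar multiples of $(c_2, \ldots, c_r)$. Suppose $\sum_{j\ge 2} m_j e_j = 0$; choosing integer lifts, the divisor $\sum_{j\ge 2} m_j x_j - (\sum_{j\ge 2} m_j) x_1$ is principal, say $\operatorname{div}(h)$, and its support lies in the fixed-point set $\{x_1, \ldots, x_r\}$ of the generator $\sigma$ of $\ZZ/d\ZZ$. The key point is that $\operatorname{div}(\sigma^* h) = \operatorname{div}(h)$, so $\sigma^* h / h$ is a constant $d$-th root of unity; since $\sigma^* y = \zeta y$ for a primitive $\zeta$ and the $\zeta^k$-eigenspace of $\sigma$ on $\CC(C)$ is exactly $y^k\,\CC(x)$, we get $h = y^k R(x)$ for some $k$ and $R \in \CC(x)$. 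Matching supports forces $R = \alpha \prod_j (x - a_j)^{b_j}$, whence the coefficient of $x_j$ in $\operatorname{div}(h)$ is $k c_j + d b_j$. Reducing modulo $d$ gives $m_j \equiv k c_j$ for all $j$, so every relation is a scalar multiple of $(c_2, \ldots, c_r)$. Hence the space of relations is exactly one-dimensional and $\dim_{\FF_d} G = r - 2$, giving $|G| = d^{r-2}$.

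I would regard the eigenfunction analysis of the last paragraph as the crux: the upper bound is a direct divisor computation, but ruling out extra torsion relations requires the structural input that any function supported on the $\sigma$-fixed points has the form $y^k R(x)$. As a consistency check, the hyperelliptic case $d = 2$, $c_j = 1$, $r = 2g+2$ recovers $G = \J(C)[2]$ of order $2^{2g} = 2^{r-2}$.
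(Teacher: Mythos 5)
Your proposal is correct, and its skeleton coincides with the paper's: both identify the classes in question with the image of the hyperplane of degree-zero $\FF_d$-assignments on the ramification points (an $(r-1)$-dimensional space), both extract the single relation $\sum_j c_j x_j \sim \frac{\sum c_j}{d}g^1_d$ from $\operatorname{div}(y)$ on the model $y^d=\prod_j(x-a_j)^{c_j}$ (the paper phrases this as ``comparing the lines $y=0$ and $z=0$''), and both conclude a count of $d^{r-2}$. The genuine difference lies in the injectivity step: the paper simply asserts that the quotient $\FF_d^{r-2}$ \emph{embeds} into $\J(C)[d]$, deferring the justification to the cited reference \cite{MR2964027}, whereas you prove it directly. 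Your argument --- any $h$ with $\operatorname{div}(h)$ supported on the $\sigma$-fixed points satisfies $\sigma^*h=\zeta^k h$, hence lies in the eigenspace $y^k\,\CC(x)$ of the Kummer decomposition $\CC(C)=\bigoplus_{k} y^k\,\CC(x)$, forcing every relation $(m_2,\dots,m_r)$ to be the multiple $k(c_2,\dots,c_r)$ --- is sound; the steps that need care (total ramification of each $x_i$ since $d$ is prime, so $dx_i\sim g^1_d$; the constancy of $\sigma^*h/h$; and the support-matching that forces $R=\alpha\prod_j(x-a_j)^{b_j}$) are all handled correctly, and the fact that each $c_j$ is a unit mod $d$ is exactly what makes the relation space one-dimensional rather than smaller. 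What the paper's route buys is brevity by outsourcing the crux; what yours buys is a self-contained proof that also explains structurally \emph{why} the kernel is exactly the line spanned by $(c_2,\dots,c_r)$ and nothing more, which is the content the paper leaves implicit. One cosmetic remark applying equally to both: the count $d^{r-2}$ is the order of the subgroup, so it includes the identity, which has order $1$ rather than $d$; your closing consistency check against the hyperelliptic case $d=2$, $r=2g+2$, recovering all of $\J(C)[2]$, matches the paper's own remark.
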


\begin{proof}
Following \cite{MR2964027}, we let $R=\{x_1,\ldots,x_r\}$ be the set of ramification points.  Then we can look at $\FF_d^r=\{\mbox{the set of functions }R\to \FF_d\}$.  The hyperplane given by $\{\mu:R\to\FF_d|\sum_{i=1}^r \mu(x_i)=0\}$ gives the set of degree $0$ divisors.  The curve can be written $y^d=\prod_{i=1}^r (x-b_i)^{d_i}$, where $b_i=f(x_i)$, for some set of $1\leq d_i<d$ such that $\sum d_i\equiv 0$ modulo $d$.  We can see that this is trivial by comparing the lines $y=0$ and $z=0$, and so, finally, we quotient by this relation, and get $\FF_d^{r-2}$ which embeds into $\J(C)[d]$, the points of order $d$ on the Jacobian of $C$.
\end{proof}

Elements $\alpha\in \FF_d^{r-1}$ in the proof of Proposition \ref{prop:numstrongpoints} will also be identified with their images in $\J(C)[d]$, and as divisors $\sum_{i=1}^r \alpha(x_i)x_i$, with $\alpha(x_i)$ interpreted as an integer $0\leq \alpha(x_i)<d$.

\begin{definition}[Strongly Cyclic Cover]
The unramified cyclic cover corresponding to a point above will be called a strongly cyclic cover.  We will denote this cover by $\pi:\tilde{C}\to C$, and when context requires that we be more specific, we will use subscripts.
\end{definition}

This implies that only a small fraction of all cyclic covers are strongly cyclic.  For instance, if $g(C)=2$ and $d=3$, then there will be four ramification points and so there are at most $3^2$ points of order 3 that are strongly cyclic, but there are $3^4$ points of order $3$ on the Jacobian of $C$.

This is in contrast with the $d=2$ case.  For hyperelliptic curves, there are $2g+2$ ramification points, so there are $2^{2g}$ points of this form, and so every point of order two on a hyperelliptic Jacobian is one of these.

The Riemann-Hurwitz theorem applied to $\tilde{C}\stackrel{\pi}{\to}C\stackrel{f}{\to}\PP^1$ immediately gives us

\begin{lemma}
\label{lemma:numerics}
If $g$ is the genus of $C$ and $\tilde{g}$ is the genus of $\tilde{C}$, then

\begin{eqnarray*}
 g & = & \frac{(r-2)(d-1)}{2}\\
\tilde{g} & = & \frac{(d-1)(rd-2d-2)}{2}\\
r & = & \frac{2g}{d-1}+2
\end{eqnarray*}
\end{lemma}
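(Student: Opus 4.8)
The plan is to apply the Riemann–Hurwitz formula twice, exactly as the surrounding text suggests. The one genuinely necessary observation is that, because $d$ is prime, every ramification point of $f$ is \emph{totally} ramified. Indeed, the inertia group at a ramification point is a nontrivial subgroup of the Galois group $\ZZ/d\ZZ$, and since $d$ is prime the only such subgroup is the whole group; hence the ramification index at each $x_i$ equals $d$, and each such point contributes $d-1$ to the ramification term.

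First I would handle $f\colon C\to\PP^1$. Since $\PP^1$ has genus $0$, the map $f$ has degree $d$, and there are $r$ totally ramified points, Riemann–Hurwitz reads
$$2g-2 = d(0-2) + r(d-1) = -2d + r(d-1).$$
Rearranging gives $2g = (r-2)(d-1)$, which is the first formula, and solving for $r$ yields the third, $r = \frac{2g}{d-1}+2$. This also confirms that $r-2 = \frac{2g}{d-1}$ is an integer, consistent with the count in Proposition \ref{prop:numstrongpoints}.

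For $\tilde{g}$ I would use that $\pi\colon\tilde{C}\to C$ is an \emph{unramified} degree-$d$ cover, so Riemann–Hurwitz carries no ramification term:
$$2\tilde{g}-2 = d(2g-2),\qquad\text{i.e.}\qquad \tilde{g} = d(g-1)+1.$$
Substituting $g=\frac{(r-2)(d-1)}{2}$ and factoring out $d-1$ gives
$$\tilde{g} = \frac{d(r-2)(d-1)}{2}-(d-1) = \frac{(d-1)\bigl(d(r-2)-2\bigr)}{2} = \frac{(d-1)(rd-2d-2)}{2},$$
the second formula. As a consistency check, one can instead apply Riemann–Hurwitz directly to the composite $f\circ\pi\colon\tilde{C}\to\PP^1$ of degree $d^2$: above each of the $r$ branch points there are exactly $d$ points of $\tilde{C}$, each of ramification index $d$, giving $2\tilde{g}-2 = -2d^2 + rd(d-1)$, which rearranges to the same value. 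There is no real obstacle here; the computation is routine once total ramification is established, and the only point requiring care is the appeal to primality of $d$ — for composite $d$ the ramification indices could be proper divisors of $d$ and the genus formulas would differ.
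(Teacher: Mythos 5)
Your proof is correct and takes essentially the same approach as the paper, which simply invokes Riemann--Hurwitz applied to the tower $\tilde{C}\stackrel{\pi}{\to}C\stackrel{f}{\to}\PP^1$ and states the formulas without further detail. Your observation that primality of $d$ forces total ramification at each $x_i$ (via the inertia subgroup of $\ZZ/d\ZZ$) is precisely the point the paper leaves implicit, and your algebra checks out.
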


\begin{theorem}
\label{thm:equations}
Let $C$ be a cyclic curve and $\alpha\in\FF_d^{r-1}$ a nonzero element of the kernel of the map to $\J(C)[d]$.  Then $C$ is given by the equation $y^d=\prod_{i=1}^r (x-b_i)^{\alpha(x_i)}$.  Let $\beta\in \FF_d^{r-1}$ give a curve $C_\beta$ with equation $z^d=\prod_{i=1}^r (x-b_i)^{\beta(x_i)}$.  Then:

\begin{enumerate}
	\item the normalization of $\tilde{C}_\beta=C\times_{\PP^1} C_\beta$ is the unramified strongly cyclic cover corresponding to $\beta$, and every strongly cyclic cover occurs in this way,
	\item two strongly cyclic covers $\tilde{C}_{\beta_1}\to C$ and $\tilde{C}_{\beta_2}\to C$ are isomorphic if and only if $\beta_1+\beta_2$ is a multiple of $\alpha$, in which case the change of coordinates is:\[(x,y,z)\mapsto \left(x,y,\zeta_d \frac{y^j}{z}\right),\] where $\zeta_d$ is a primitive $d$th root of unity,
	\item Let $A_\beta=\{x_i|\beta(x_i)\neq 0\}$.  Then $\tilde{C}_\beta$ is a ramified $d$-cyclic cover of a curve of genus $g_0=\frac{(d-1)(|A_\beta|-2)}{2}$ and curves of genera $g_i=\frac{(d-1)(|A_{i\alpha-\beta}|-2)}{2}$ where $i=1,\ldots,d-1$.
	\item The number of strongly cyclic covers of $C_\beta$ that cover a curve of genus $g=\frac{(k-2)(d-1)}{2}$ is $\binom{r}{k}\left(\left(1-\frac{1}{d}\right)^k d^{k-1}-\frac{(-1)^k}{d}\right)$.
\end{enumerate}
\end{theorem}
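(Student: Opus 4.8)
The plan is to carry out everything with the explicit Kummer equations. For part (1) I would analyze the fiber product $C\times_{\PP^1}C_\beta$ one local ring at a time. Away from the $b_i$ both $y$ and $z$ are units and nothing happens; over a branch point $b_i$ the map $f$ is totally ramified (index $d$, since $\alpha(x_i)\in\FF_d^\times$ and $d$ is prime), so a local parameter $u$ on $C$ satisfies $x-b_i=u^d$ up to a unit. Then $v_{x_i}\!\left(\prod_j(x-b_j)^{\beta(x_j)}\right)=d\,\beta(x_i)\equiv0\pmod d$, and the same computation at $\infty$ uses $\sum\beta(x_i)\equiv0$; hence the function whose $d$th root defines the cover is everywhere a $d$th power in valuation, so $\tilde C_\beta\to C$ is unramified. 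Its class in $\J(C)[d]$ is the divisor $\sum\beta(x_i)x_i$ by construction, and Proposition \ref{prop:numstrongpoints} says every strongly cyclic cover is of this shape. For part (2) I would compute directly: the substitution $(x,y,z)\mapsto(x,y,\zeta_d y^j/z)$ turns $z^d=\prod(x-b_i)^{\beta_2(x_i)}$ into $(z')^d=\prod(x-b_i)^{j\alpha(x_i)-\beta_2(x_i)}$, so it is an isomorphism $\tilde C_{\beta_2}\to\tilde C_{\beta_1}$ exactly when $\beta_1\equiv j\alpha-\beta_2$, i.e. $\beta_1+\beta_2\in\langle\alpha\rangle$. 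Conversely, any isomorphism commuting with the structure maps to $\PP^1$ must rescale $y$ and $z$ by roots of unity, which I would confirm forces it into the stated form.

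For part (3) I would exploit the two commuting automorphisms $\sigma:y\mapsto\zeta_d y$ and $\tau:z\mapsto\zeta_d z$, which exhibit $\tilde C_\beta$ as a Galois cover of $\PP^1$ with group $(\ZZ/d)^2$. Because $d$ is prime there are exactly $d+1$ subgroups of order $d$, hence $d+1$ intermediate cyclic curves, one for each character $(\lambda:\mu)\in\PP^1(\FF_d)$, namely the cyclic cover of $\PP^1$ with branch data $\lambda\alpha+\mu\beta$. The quotient by $\langle\tau\rangle$ is $C$ and yields the unramified cover of part (1); the remaining $d$ quotients carry data $\beta$ and $i\alpha-\beta$ for $i=1,\dots,d-1$. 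Reading the number of nonzero entries of each datum as its number of branch points and applying Lemma \ref{lemma:numerics} gives the genera $g_0$ and $g_i$; the only point to verify is that these $d$ covers are genuinely ramified, which holds since their branch loci differ from that of $C$.

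The real content is part (4). By part (3) a covered cyclic curve has genus $\frac{(k-2)(d-1)}{2}$ exactly when it has $k$ branch points, i.e. when the corresponding vector in $\FF_d^r$ has exactly $k$ nonzero coordinates. I would therefore reduce the count to an enumeration of admissible ramification data: choose the $k$-element branch set $S$ ($\binom rk$ ways) and an assignment $S\to\FF_d^\times$ subject to the single $\FF_d$-linear condition $\sum_{x\in S}a(x)=s$ imposed by compatibility with the fixed cover $C$ (equivalently, the behaviour at $\infty$). This count is then evaluated by additive-character orthogonality on $\FF_d$: writing the indicator as $\frac1d\sum_{\psi}\bar\psi(s)\prod_{x\in S}\left(\sum_{a\in\FF_d^\times}\psi(a)\right)$, the trivial character contributes $\frac{(d-1)^k}{d}=\left(1-\frac1d\right)^k d^{k-1}$, while each of the $d-1$ nontrivial characters contributes $\frac{(-1)^k}{d}\bar\psi(s)$; summing the latter gives exactly $-\frac{(-1)^k}{d}$ when $s\neq0$. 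Multiplying by $\binom rk$ produces the asserted expression, and as a consistency check $\sum_k$ should recover $d^{r-1}$.

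The step I expect to be genuinely delicate is pinning down that the relevant linear constraint has a \emph{nonzero} right-hand side $s$. This is precisely what yields the correction $-\frac{(-1)^k}{d}$ rather than the $+\frac{(d-1)(-1)^k}{d}$ that a homogeneous (sum-zero) constraint would give, and it is the difference between counting all cyclic curves with $k$ branch points and counting those that actually arise as covered curves of a strongly cyclic cover of $C_\beta$. I would therefore devote the most care to the bookkeeping that translates the phrase ``strongly cyclic cover of $C_\beta$ covering a genus-$g$ curve'' into this affine condition, keeping track of the equivalence from part (2) (the data is well-defined only modulo $\langle\alpha\rangle$) so that each cover is counted with the correct multiplicity and the totals match.
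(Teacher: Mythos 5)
Your treatment of parts (1)--(3) follows essentially the same route as the paper. Part (1) is the paper's local analysis (you phrase it as divisibility by $d$ of the valuations of the Kummer function, the paper as counting the branches of $z^k=w^k$ over the node; both work). The forward direction of (2) is the paper's explicit substitution, and your Galois-correspondence picture for (3) is exactly the structure the paper sets up inside its proof of (2), namely the $d+1$ intermediate fields $\CC(x,y)$, $\CC(x,z)$, $\CC(x,y^i/z)$ of the $(\ZZ/d\ZZ)^2$-extension $\CC(x,y,z)/\CC(x)$. The one gap before (4) is the converse half of (2): ``any isomorphism commuting with the structure maps must rescale $y$ and $z$ by roots of unity, which I would confirm'' is precisely the content of that direction, not a triviality. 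The paper proves it: an isomorphism over $C$ sends $z$ to some rational function $R(x,y,z)$ with $R^d\in\CC(x)$, and the subfield structure of the $(\ZZ/d\ZZ)^2$-extension then forces $R$ to be a $d$th root of unity times one of the monomials $y$, $z$, $y^i/z$. You would need to supply that Kummer-theoretic argument; it does not follow from commutation with the maps to $\PP^1$ alone.

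The genuine gap is in part (4), at exactly the step you flagged as delicate, and it resolves against your assumption. The admissibility condition on strongly cyclic data is homogeneous: by Proposition \ref{prop:numstrongpoints}, a vector defines a strongly cyclic cover only if its coordinates sum to $0$ in $\FF_d$ (this is also what makes $\infty$ unbranched for each $C_\gamma$, hence what makes the genus formula in part (3) involve $|A_\gamma|$ rather than $|A_\gamma|+1$). Since $\alpha$ and $\beta$ both lie in this hyperplane, so does every datum $i\alpha-\beta$ appearing in part (3); nothing in the setup can produce an inhomogeneous condition $\sum_{x\in S}a(x)=s$ with $s\neq 0$. Running your character sum with $s=0$ gives $\binom{r}{k}\left(\left(1-\frac{1}{d}\right)^k d^{k-1}+\frac{(d-1)(-1)^k}{d}\right)$, not the stated formula, and your proposed consistency check cannot detect the discrepancy because both families of numbers sum to $d^{r-1}$ over $k$.

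What you have actually run into is an error in the paper itself. The paper's own proof of (4) imposes the correct sum-zero condition but then miscounts: in the inclusion-exclusion, the intersection of all $k$ events ``the $i$th value is zero'' is the single zero function, contributing $(-1)^k$ rather than $(-1)^k/d$, and the final ``simplification'' is also off; the correct count per $k$-subset is $\frac{(d-1)^k+(d-1)(-1)^k}{d}$. The $d=2$ case adjudicates between the two formulas: for a hyperelliptic curve the valid data supported on $k$ points are exactly the even-cardinality $k$-subsets of branch points, so the count must be $\binom{r}{k}$ for $k$ even and $0$ for $k$ odd, which is what the sum-zero formula gives, while the theorem's displayed formula gives the reverse. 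So the displayed formula should be corrected to end in $+\frac{(d-1)(-1)^k}{d}$; your derivation reproduces the printed (incorrect) expression only because it smuggles in a false constraint, and no bookkeeping of the equivalence from part (2) will make that constraint inhomogeneous.
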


We note that in part 2, although not isomorphic as covers of $C$, $\tilde{C}_{\beta_1}$ and $\tilde{C}_{\beta_2}$ are isomorphic as curves if $\beta_2$ is a multiple of $\beta_1$.  Specifically, if $\mu$ is the line bundle corresponding to $\beta_1$, then they are given by the relative spectra $\sh{O}_C\oplus\mu\oplus\ldots\oplus\mu^{d-1}$, and $\sh{O}_C\oplus\mu^k\oplus\ldots\oplus\mu^{(d-1)k}$, which are isomorphic because $d$ and $k$ are relatively prime, and $d$ is the order of $\mu$.

For part 4, the result of Bujalance \cite{MR855143} states the $d=2$ case of this slightly more strongly, as a partition over half of the genera that appear.  For higher degree, the situation is much less clear, just as it is in part 3, because the relation between $\beta$ and the $i\alpha-\beta$'s is more complex.

\begin{proof}
To prove the first point, it suffices to show that if $f_i:C_i\to\PP^1$ for $i=1,2$ be maps of degree $d$ with the same Galois group.  Assume further that $\Branch(f_2)\subset\Branch(f_1)$ and the ramification type over each common branch point is the same.  Then the normalization of $C_1\times_{\PP^1} C_2\to C_2$ is unramified.

This is a local issue near a branch point.  So, without loss of generality, we can take $f_i:\CC\to\CC$ to be $z^k$ and $w^k$ for some $k$.  Then, the fiber product is the curve $z^k=w^k$ in $\CC^2$ and the structure map to $\CC$ is $z^k$.  The fiber over $0$ is singular, but it has $k$ distinct branches, so that the normalization is unramified over $0$.

Part two follows by a direct computation, where $f(x)$ is defined by $\frac{z_1z_2}{y}=f(x)$ and $z_1,z_2$ are the $z$-coordinates for the covers given by $\beta_1$ and $\beta_2$.

Now, we show that this is the only case.  Let $\phi:\tilde{C}_{\beta_1}\to\tilde{C}_{\beta_2}$ be an isomorphism over $C$.  The field $\CC(\tilde{C}_{\beta_1})$ is generated by $x,y,z$, so we'll denote it by $\CC(x,y,z)$ and ignore the relations in the notation.  So there is a rational function $R(x,y,z)\in\CC(x,y,z)$ so that $\phi(x,y,z)=(x,y,R(x,y,z))$, and $R(x,y,z)^d=\prod_{i=1}^r (x-b_i)^{\beta_2(x_i)}$.  Under the action of multiplying $y$ and $z$ by $d$th roots of unity, the fixed fields of index $d$ are $\CC(x,y)=\CC(C_{\beta_1})$, $\CC(x,z)=\CC(C_{\beta_2})$, and $\CC(x,y^i/z)=\CC(C_{i\alpha-\beta})$.  Let $w$ be $y,z$ or $y^i/z$ for some $i$.

Then, we can write $R(x,y,z)=a(x)+b(x)w$.  Then, as $R(x,y,z)^d\in\CC(x)$, we have $a(x)^d+b(x)^dw^d+a(x)b(x)w(\sum_{i=0}^{d-2}f_i(x)w^i)\in\CC(x)$.  So then either $w\in\CC(x)$ or $a(x)b(x)=0$, and if $b(x)=0$, then $R(x,y,z)\in \CC(x)$, thus, $a(x)=0$.  So $R=bw$.  Raising to the $d$th power, we get that $\prod_{i=1}^r (x-b_i)^{\beta_2(x_i)}=b^d w^d=b^d\prod_{i=1}^r (x-b_i)^{\chi(x_i)}$ where $\chi$ is $\beta_1$,$\beta_2$ or $i\alpha-\beta_1$, depending on $w$.  We divide and get $b^d=\prod_{i=1}^r (x-b_i)^{\beta_2(x_i)-\chi(x_i)}$, and as $|\beta_2(x_i)-\chi(x_i)|<d$, we must have $b(x)^d=1$, so $b(x)=\zeta_d$ for some root of unity.  Thus, $\beta_2=\chi$, and we have $\beta_2=i\alpha-\beta_1$ for some $i$.

Part three is just the map $\tilde{C}_{\beta}\to C_\beta$ combined with part two.

Part four is a fairly straightforward counting argument.  We need to have exactly $k$ ramification points, we need to count the number of $\beta$'s such that $|A_\beta|=k$.  This is $\binom{r}{k}$ times the number of linear functions $\{x_1,\ldots,x_k\}\to \FF_d$ whose values add up to zero and none of them are zero.  This can be counted by inclusion-exclusion, giving us $\sum_{i=0}^k (-1)^i \binom{k}{i}d^{k-i-1}$, and this simplifies to $\left(1-\frac{1}{d}\right)^k d^{k-1}-\frac{(-1)^k}{d}$, giving us the count.
\end{proof}

\begin{corollary}
Any degree $d$ cyclic curve given by $y^d=f(x)$ such that $f(x)\in\QQ[x]$ and which splits over $\QQ$ into the product of two polynomials of degrees divisible by $d$, then the curve admits a smooth strongly cyclic cover defined over $\QQ$.
\end{corollary}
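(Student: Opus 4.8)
The plan is to exploit the hypothesis on $f$ to realize one of the covers of Theorem~\ref{thm:equations} using only $\QQ$-rational data. Write $f = g\,h$ with $g,h\in\QQ[x]$ and $d\mid\deg g$, $d\mid\deg h$, and let $C_g$ be the curve $z^d = g(x)$. The element of $\FF_d^{r-1}$ that $C_g$ represents is $\beta_g$, whose value at $x_i$ is the multiplicity of $b_i=f(x_i)$ in $g$ reduced modulo $d$; note $C_g$ is only birational to the reduced model $C_{\beta_g}$, but it has the same smooth complete model, so it is harmless to work with $g$ itself. I will show that the normalization $\tilde C_{\beta_g}$ of $C\times_{\PP^1}C_g$ is a smooth strongly cyclic cover of $C$ and that it is defined over $\QQ$.

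First I would check that $C_g$ furnishes admissible covering data for part (1) of Theorem~\ref{thm:equations}. Every root of $g$ is a root of $f$, so $\Branch(C_g)\subseteq\Branch(C)=\{b_1,\dots,b_r\}$; and since $d$ is prime, at any common branch point both $f$ and $g$ have local exponent prime to $d$, so both covers are totally ramified there and hence share the same (cyclic) ramification type. The divisibility $d\mid\deg g$ is exactly the condition $\sum_i\beta_g(x_i)\equiv 0$, so $C_g$ is unramified over $\infty$ and $\beta_g$ lies in the relevant hyperplane. With these hypotheses in place, part (1) identifies $\tilde C_{\beta_g}$ as an unramified strongly cyclic cover of $C$, and being the normalization of a complete curve it is automatically smooth.

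For the rationality I would argue that the construction never leaves $\QQ$: the polynomial $g$ lies in $\QQ[x]$, so $C_g$ and the fiber product $C\times_{\PP^1}C_g$ are defined over $\QQ$, and the normalization of a curve over the perfect field $\QQ$ is again defined over $\QQ$. Hence $\tilde C_{\beta_g}$ is a smooth strongly cyclic cover of $C$ over $\QQ$. It remains to confirm the cover is nontrivial, i.e.\ that the image of $\beta_g$ in $\J(C)[d]$ is nonzero; since $C$ is irreducible $f$ is not a $d$-th power, so at least one factor, say $g$, is not a $d$-th power and has $\beta_g\neq 0$, and outside the degenerate case in which the reduced exponent vector of $g$ is proportional to that of $f$ one has $\beta_g\notin\langle\alpha\rangle$, so the cover is a genuine connected $d$-cyclic cover.

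The main obstacle is precisely the rationality, and it is worth isolating why the factorization hypothesis is the crux. In general the normal form $z^d=\prod_i(x-b_i)^{\beta(x_i)}$ of a strongly cyclic cover requires the individual branch points $b_i$, which need not be rational, so one cannot expect an arbitrary $\beta$ to be realized over $\QQ$. A $\QQ$-rational factorization $f=g\,h$ into factors of degree divisible by $d$ is exactly a way of packaging a Galois-stable block of the $b_i$ into a single rational polynomial $g$ of the correct degree; using this unreduced $g$, rather than its reduction $\prod_i(x-b_i)^{\beta_g(x_i)}$ (which may be irrational), together with the functoriality of normalization, is what keeps the cover over $\QQ$. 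I expect the verification of the hypotheses of Theorem~\ref{thm:equations}(1) to be routine once total ramification at prime degree is observed, so essentially the entire content of the corollary sits in this rationality packaging.
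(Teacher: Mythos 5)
Your proposal is correct and takes essentially the route the paper intends: the paper states this corollary without proof as an immediate consequence of Theorem \ref{thm:equations}(1), namely taking $\beta$ to be the exponent vector of the rational factor $g$, forming the fiber product $C\times_{\PP^1}C_g$ (manifestly defined over $\QQ$), and noting that normalization over the perfect field $\QQ$ stays over $\QQ$. Your caveat about the degenerate case where $\beta_g$ is a multiple of $\alpha$ (e.g.\ $d=3$, $f=(x^3-2)^2$, $g=h=x^3-2$, where the resulting cover is disconnected) points to an imprecision in the paper's own statement rather than a gap in your argument.
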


Finally, we look at the branch points of those strongly cyclic covers which are also $d$-cyclic curves.  These are given by pairs of branch points, $b_i,b_j$ and we require that the corresponding values of $\beta$ are $\beta(x_i)=1$ and $\beta(x_j)=d-1$.  Then, the cover is given by the equations \[y^d=\prod_{k=1}^r (x-b_k)^{\beta(x_k)}\quad z^d=(x-b_i)(x-b_j)^{d-1}.\]  We make the substitution $z=t(x-a_j)$, which transforms the equation in $z$ to $t^d=\frac{x-a_i}{x-a_j}$, which we can then solve for $x$ to get $x=\frac{a_i-a_j t^d}{1-t^d}$.

From here, it becomes a straightforward manipulation:

\begin{eqnarray*}
y^d&=&\prod_{k=1}^r \left(\frac{a_i-a_jt^d}{1-t^d}-a_k\right)^{\beta(x_k)}\\
&=&\prod_{k=1}^r \left(\frac{t^d(a_k-a_j)-(a_k-a_i)}{1-t^d}\right)^{\beta(x_k)}\\
&=&\frac{t^d(a_i-a_j)^{d}}{(1-t^d)^d}\prod_{k\neq i,j} \left(\frac{t^d(a_k-a_j)-(a_k-a_i)}{1-t^d}\right)^{\beta(x_k)}\\
&=&\frac{t^d(a_i-a_j)^{d}}{(1-t^d)^{\sum_k \beta(x_k)}}\prod_{k\neq i,j} \left(t^d(a_k-a_j)-(a_k-a_i)\right)^{\beta(x_k)}\\
&=&\frac{t^d(a_i-a_j)^{d}}{(1-t^d)^{\sum_k \beta(x_k)}}\prod_{k\neq i,j}(a_k-a_j)^{\beta(x_k)}\prod_{k\neq i,j} \left(t^d-\frac{a_k-a_i}{a_k-a_j}\right)^{\beta(x_k)}\\
\end{eqnarray*}

Now, we set \[w=\frac{y(1-t^d)^{(\sum \beta(x_k))/d}}{t(a_i-a_j)\left(\prod_{k\neq i,j}\left(a_k-a_j\right)^{\beta(x_k)}\right)^{1/d}}\] and obtain the equation \[w^d=\prod_{k\neq i,j} \left(t^2-\frac{a_k-a_i}{a_k-a_j}\right)^{\beta(x_k)}\]

\begin{corollary}
Let $C$ be a $d$-cyclic curve with equation $y^d=\prod_{k=1}^r (x-a_i)^{\beta(x_i)}$.  Then for each ordered pair (except when $d=2$, then unordered) of point $(a_i,0)$ and $(a_j,0)$, there is a smooth strongly cyclic cover, with additional equation $z^d=(x-a_i)(x-a_j)^{d-1}$, $F_{i,j}:C_{i,j}\to C$ where $C_{i,j}$ is itself $d$-cyclic, given by $y^d=\prod_{k\neq i,j} \left(x^2-\frac{a_k-a_i}{a_k-a_j}\right)^{\beta(x_k)}$ with \[F_{i,j}(x,y)=\left(\frac{a_i-a_j x^d}{1-x^d},\frac{wt(a_i-a_j)\left(\prod_{k\neq i,j}(a_k-a_j)\right)^{1/d}}{(1-t^d)^{\sum \beta(x_j)/d}}\right).\]  This can be characterized, up to equivalence, by the fact that the two chosen points are the only ones not covered by ramification points.
\end{corollary}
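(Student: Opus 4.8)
The plan is to assemble the three assertions of the corollary --- existence and smoothness of the cover, the explicit $d$-cyclic equation together with the formula for $F_{i,j}$, and the characterization --- from Theorem \ref{thm:equations} and the explicit computation carried out just above. Let $\gamma\in\FF_d^{r}$ be the function with $\gamma(x_i)=1$, $\gamma(x_j)=d-1$, and $\gamma(x_k)=0$ for $k\neq i,j$. Since $\gamma(x_i)+\gamma(x_j)=d\equiv 0$ and $\gamma\neq 0$, it is a nonzero element of the kernel, so by part (1) of Theorem \ref{thm:equations} the normalization $C_{i,j}$ of $C\times_{\PP^1}C_\gamma$, where $C_\gamma$ is cut out by $z^d=(x-a_i)(x-a_j)^{d-1}$, is a smooth unramified strongly cyclic cover $F_{i,j}:C_{i,j}\to C$. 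That $C_{i,j}$ is itself $d$-cyclic and has the stated equation is exactly the content of the displayed computation preceding the statement: the substitution $z=t(x-a_j)$ inverts to $x=\frac{a_i-a_jt^d}{1-t^d}$, and substituting into $y^d=\prod_k (x-a_k)^{\beta(x_k)}$ and clearing the constant factor via the definition of $w$ produces $w^d=\prod_{k\neq i,j}\left(t^d-\frac{a_k-a_i}{a_k-a_j}\right)^{\beta(x_k)}$, a $d$-cyclic curve over the $t$-line. Renaming $(t,w)$ as $(x,y)$ gives the asserted model, and reading the two substitutions backwards gives the formula for $F_{i,j}$.

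For the characterization, first note that because $F_{i,j}$ is unramified the statement cannot refer to ramification of $F_{i,j}$ itself; rather, ``ramification points'' are the fixed points of the order-$d$ automorphism of $C_{i,j}$, i.e.\ the branch points of $C_{i,j}\to\PP^1_t$, which are the points with $w=0$, lying over the values $t^d=\frac{a_k-a_i}{a_k-a_j}$ for $k\neq i,j$ (every such $k$ contributes, since $\beta(x_k)\neq 0$ for all ramification points of $f$). I would then compute the $F_{i,j}$-image of each of these: using $x=\frac{a_i-a_jt^d}{1-t^d}$ and substituting $t^d=\frac{a_k-a_i}{a_k-a_j}$ collapses, after clearing denominators, to $x=a_k$, and since $w=0$ there the second coordinate of $F_{i,j}$ vanishes, so each such point maps to $(a_k,0)=x_k$. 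Thus every ramification point $x_k$ of $f$ with $k\neq i,j$ is covered. It remains to check that $x_i$ and $x_j$ are not: their $F_{i,j}$-preimages lie over $t=0$ and $t=\infty$ respectively (the two points of $\PP^1_t$ at which $t\mapsto\frac{a_i-a_jt^d}{1-t^d}$ is totally ramified), and there the product $\prod_{k\neq i,j}(t^d-\frac{a_k-a_i}{a_k-a_j})^{\beta(x_k)}$ is a nonzero constant, so $w\neq 0$ and no fixed point of the automorphism lies over them. Hence the two chosen points are exactly the ramification points of $f$ that are uncovered.

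For the converse direction I would run the construction backwards. If $C'\to C$ is any $d$-cyclic strongly cyclic cover, part (3) of Theorem \ref{thm:equations} shows that its defining parameter $\gamma'$ has genus-$0$ quotient precisely when $|A_{\gamma'}|=2$, so its support is a pair $\{x_i,x_j\}$; repeating the image computation identifies this support with the set of uncovered ramification points, while the constraint that the two nonzero values of $\gamma'$ lie in $(0,d)$ and sum to a multiple of $d$ forces them to be $\{1,d-1\}$ up to the equivalences discussed after Theorem \ref{thm:equations}. This is also where the ordered-versus-unordered bookkeeping enters: the ordered pair records which of the two points carries exponent $1$ and which carries $d-1$, and the two assignments coincide exactly when $d=2$. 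The main obstacle I expect is the first image computation: although each individual step is a routine manipulation of the M\"obius substitution, one must track the behavior over $a_i$, $a_j$, and $\infty$ --- including the points at infinity of the complete models of $C$ and $C_{i,j}$ --- carefully enough to be sure the correspondence between the two ramification loci is exactly as claimed, since it is this bookkeeping, rather than any single algebraic identity, on which the characterization rests.
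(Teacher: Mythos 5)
Your proposal is correct and follows the paper's own route: the paper gives no separate proof of this corollary --- its proof \emph{is} the displayed computation immediately preceding the statement (the substitution $z=t(x-a_j)$, the inversion $x=\frac{a_i-a_jt^d}{1-t^d}$, and the normalization defining $w$), with existence and smoothness supplied by part (1) of Theorem \ref{thm:equations}, exactly as in your first paragraph. You even silently repair the paper's typo: its final display and the corollary read $t^2$ and $x^2$ where $t^d$ and $x^d$ are meant, a relic of the $d=2$ case. Where you go beyond the paper is the characterization: the sentence about the two chosen points being the only ones not covered by ramification points is asserted there without any argument, and your computation --- the ramification points of $C_{i,j}$ lie over $t^d=\frac{a_k-a_i}{a_k-a_j}$ and map to $(a_k,0)$, while no point with $w=0$ lies over $t=0$ or $t=\infty$ --- together with the converse via part (3) and the scaling equivalence identifying $(m,d-m)$ with $(1,d-1)$, is a correct way to supply it.

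Two repairs are needed, though neither is fatal. First, at $t=\infty$ the product $\prod_{k\neq i,j}\bigl(t^d-\frac{a_k-a_i}{a_k-a_j}\bigr)^{\beta(x_k)}$ is not ``a nonzero constant''; the correct statement is that $C_{i,j}\to\PP^1_t$ is unramified over $\infty$ because the right-hand side has degree $d\sum_{k\neq i,j}\beta(x_k)$, divisible by $d$ (each factor contributes degree $d$), so no fixed point of the automorphism lies there --- you flagged this bookkeeping yourself, but the justification you wrote is the wrong one. Second, and more substantively, you call $\gamma$ ``a nonzero element of the kernel'' of $\FF_d^{r-1}\to\J(C)[d]$: that is the opposite of what is needed. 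In the paper's setup the kernel is generated by the class $\beta$ cutting out $C$ itself, and a $\gamma$ in the kernel would yield the trivial, disconnected cover; the congruence $\gamma(x_i)+\gamma(x_j)\equiv 0\pmod d$ only places $\gamma$ in the degree-zero hyperplane. What you should check --- and what does hold, since $\beta$ is supported on all $r>2$ ramification points while $\gamma$ is supported on exactly two, so $\gamma$ is not a multiple of $\beta$ --- is that $\gamma$ has \emph{nonzero} image in $\J(C)[d]$. With those two sentences corrected, your argument stands and in fact documents more than the paper does.
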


\bibliographystyle{alpha}
\bibliography{Cyclic}
\end{document}